\DeclareMathOperator{\grad}{grad}
\DeclareMathOperator{\dif}{d}
\renewcommand{\1}{\mathds{1}}
\newcommand{\V}{\mathscr{V}}
\def \a{\alpha}
\def \g{\gamma}
\def \o{\omega}
\def \O{\Omega}
\def \phi{\varphi}
\def \Phi{\varPhi}
\def \r{\rho}
\def \C{\mathbb{C}\,}
\def\widecheckg{g^{\hspace*{-2.5pt}\vbox to 5pt{\hbox to
0pt{\LARGE$\check{}$}}}\hspace*{2pt}}
\def\widecheckl{\lambda^{\hspace*{-3.5pt}\vbox to 8pt{\hbox to
0pt{\LARGE$\check{}$}}}\hspace*{2pt}}
\begin{document}

\title{On $G_2$-manifolds and geometry in dimensions $6$ and $8$}
\author{Radu Pantilie}  
%\thanks{} 
\address{R.~Pantilie, Institutul de Matematic\u a ``Simion~Stoilow'' al Academiei Rom\^ane,
C.P. 1-764, 014700, Bucure\c sti, Rom\^ania} 
\email{\href{mailto:Radu.Pantilie@imar.ro}{Radu.Pantilie@imar.ro}} 
\subjclass[2020]{53C29, 20G41} 
\keywords{$G_2$-manifolds, ${\rm Spin}(7)$-manifolds}

\newtheorem{thm}{Theorem}[section]
\newtheorem{lem}[thm]{Lemma}
\newtheorem{cor}[thm]{Corollary}
\newtheorem{prop}[thm]{Proposition}

\theoremstyle{definition}

\newtheorem{defn}[thm]{Definition}
\newtheorem{rem}[thm]{Remark}
\newtheorem{exm}[thm]{Example}

\numberwithin{equation}{section}

\thispagestyle{empty}

\begin{abstract}
We study the geometry induced on the local orbit spaces of Killing vector fields on (Riemannian) $G$-manifolds, with an emphasis on the cases 
$G={\rm Spin}(7)$ and $G=G_2$\,. Along the way, we classify the harmonic morphisms with one-dimensional fibres from $G_2$-manifolds to Einstein manifolds. 
\end{abstract} 

\maketitle 

\section*{Introduction}  

\indent 
It seems that, yet, there are no simple constructions of Riemannian manifolds with exceptional holonomy 
(see \cite{Bry-1987} and \cite{Sal-holo_book} for basic facts on 
these manifolds, and, also, Section \ref{section:rel_6}\,, below, for fairly explicit descriptions of the exceptional holonomy Lie algebras representations). 
To us, the archetypical (simple) construction of a `special' Riemannian metric is the classical Gibbons-Hawking construction 
(see \cite{Pan-hmhKPW}\,, and the refereneces therein) that characterises (for example), locally, the Ricci-flat anti-self-dual 
(equivalently, four-dimensional hyper-K\"ahler) manifolds endowed with a nowhere zero Killing vector field (see Remark \ref{rem:Killing_inf_auto}\,).\\ 
\indent 
We are, thus, led to the study of the geometry induced on the local orbit spaces of Killing vector fields on $G_2$-manifolds and ${\rm Spin}(7)$-manifolds 
(compare \cite{ApoSal}\,, \cite{Fow-quotient_Spin(7)}\,, \cite{Fow}\,). 
However, it is useful to start from a more general setting (see Section \ref{section:useful_setting}\,, below) 
whose easiest relevant case is provided by the, already mentioned, Ricci-flat anti-self-dual manifolds.\\ 
\indent 
In Section \ref{section:rel_6}\,, we present some results specific to dimension $6$ such as (Proposition \ref{prop:starting_rel_6}\,) 
the fact that, with respect to the orthogonal reductive decomposition $\mathfrak{g}_2=\mathfrak{su}(3)\oplus\mathfrak{m}$\,, the adjoint representation of 
$\mathfrak{su}(3)$ on $\mathfrak{m}$ is equivalent to its canonical representation on $\C^{\!3}$ (this, presumably known, fact led us to the useful setting  
of Section \ref{section:useful_setting}\,). The results presented in that section 
are, firstly, applied to characterise the harmonic morphisms from $G_2$-manifolds to Einstein manifolds of dimension $6$ (Corollary \ref{cor:hm_G2}\,).\\ 
\indent 
The results of Sections \ref{section:useful_setting} and \ref{section:rel_6} are applied, 
in Sections \ref{section:Killing_on_G2-manifolds} and \ref{section:Killing_on_Spin(7)-manifolds}\,, 
to obtain a better understanding of Killing vector fields on Riemannian manifolds with exceptional holonomy.\\ 
\indent 
It is fairly well known that the ($2$-)harmonic morphisms are in the background of the Gibbons-Hawking construction. 
Here (Remarks \ref{rem:G_2_with_B=0} and \ref{rem:for_Spin(7)}(1)\,) we have $4$-harmonic morphisms (see \cite{BaiWoo2} 
for more on this notion).\\  
\indent 
In Section \ref{section:sK}\,, some results on Killing vector fields on special K\"ahler manifolds are given.\\ 
\indent 
Finally, in Appendix \ref{section:appendix} we present what we believe to be the source of octonions.\\ 
\indent 
Other work, similarly motivated, can be found in \cite{ApoSal}\,, \cite{Cab}\,, \cite{Don-RemG2}\,, \cite{Fow-quotient_Spin(7)} and \cite{Fow}\,; 
see, also, \cite{Fow-to_appear} and \cite{MadSwa}\,.

\section{A useful setting} \label{section:useful_setting} 

\indent 
In this paper, for simplicity, unless otherwise specified, we work in the complex-analytic category.\\ 
\indent 
We are interested in the following setting.  

\begin{rem} \label{rem:starting_rel_6} 
Let $\mathfrak{g}$ be a Lie algebra endowed with a faithful orthogonal representation 
on an Euclidean space $U$ and a reductive de\-com\-po\-si\-tion $\mathfrak{g}=\mathfrak{h}\oplus\mathfrak{m}$ such that $\mathfrak{m}\subseteq U$, 
and the induced representation of $\mathfrak{h}$ on $U$ is the restriction of the given representation of $\mathfrak{g}$ on $U$.\\ 
\indent 
Let $G$ and $H$ be the simply-connected Lie groups with Lie algebras $\mathfrak{g}$ and $\mathfrak{h}$. 
Then if  $M$ is a manifold endowed with a reduction of its frame bundle to $H$ through its representation on $\mathfrak{m}$ we, also, have 
a vector bundle $E$ over $M$, with structural group $G$ and typical fibre $U$, endowed with a morphism of vector bundles 
$\r:E\to TM$ such that the following two facts hold:\\ 
\indent 
\quad$\bullet$ Any $\r$-connection \cite{Pan-qgfs} on $E$ corresponds to a connection on $E$ and a section of $({\rm ker}\r)^*\otimes{\rm End}\,E$  
(given by the orthogonal decomposition $U=\mathfrak{m}\oplus\mathfrak{m}^{\perp}$).\\  
\indent 
\quad$\bullet$ Any connection on $E$ (compatible with its structural group) corresponds to a connection on $M$ and a $(1,1)$-tensor field on $M$. 
\end{rem} 

\indent 
A significant particular case of Remark \ref{rem:starting_rel_6} is the following.  
Let $\mathfrak{g}\subseteq\mathfrak{so}(n+1)$ be a Lie algebra endowed with a faithful orthogonal 
representation of dimension $n+1$\,, and let $\mathfrak{h}=\mathfrak{g}\cap\mathfrak{so}(n)$\,, where $\mathfrak{so}(n)\subseteq\mathfrak{so}(n+1)$ 
is the Lie subalgebra preserving some nondegenerate hyperspace, where $n\in\mathbb{N}\setminus\{0\}$\,. 
Assuming $\mathfrak{g}$ nondegenerate, with respect to 
the Killing form of $\mathfrak{so}(n+1)$\,, and denoting by $\mathfrak{m}$ the orthogonal 
complement of $\mathfrak{h}$ in $\mathfrak{g}$\,, we obtain a reductive decomposition $\mathfrak{g}=\mathfrak{h}\oplus\mathfrak{m}$\,.\\ 
\indent 
We, further, assume $\dim\mathfrak{m}=n$ and the representation of $\mathfrak{h}$ on $\mathfrak{m}$\,, 
induced by the adjoint representation of $\mathfrak{g}$\,, be (up to some nonzero factor) the given $\mathfrak{h}\subseteq\mathfrak{so}(n)$\,. 
Therefore there exists a linear embedding $\mathfrak{m}\to\mathfrak{g}\subseteq\mathfrak{so}(n+1)$\,, given by 
\begin{equation*} 
x\mapsto 
\begin{pmatrix} 
h(x) & x \\ 
-x^T & 0 
\end{pmatrix} 
\;,
\end{equation*} 
for any $x\in\mathfrak{m}$\,, where $h:\mathfrak{m}\to\mathfrak{so}(n)$ is a linear map such that $[A,h(x)]=h(Ax)$\,, 
for any $A\in\mathfrak{h}$ and $x\in\mathfrak{m}$\,.\\ 
\indent 
Therefore if $\widetilde{A}=(A,x)\in\mathfrak{g}=\mathfrak{h}\oplus\mathfrak{m}$ then, as an element of $\mathfrak{so}(n+1)$\,, 
\begin{equation*} \label{e:Gamma} 
\widetilde{A}= 
\begin{pmatrix} 
A+h(x) & x \\ 
-x^T & 0 
\end{pmatrix} 
\;.
\end{equation*} 

\indent 
Now, we use Remark \ref{rem:starting_rel_6}\,, with $M$  a manifold endowed with an almost $H$-structure and $E=T(L\setminus0)/(\C\setminus\{0\})$\,, 
where $L$ is a line bundle over $M$ endowed with a connection. In particular, $E=TM\oplus\C$, and we denote by $\1$ the section of $E$ 
corresponding to $(0,1)$\,, and by $u$ the nowhere zero function on $M$ such that $u\1$ is given by the infinitesimal generator of the 
action of $\C\setminus\{0\}$ on $L$\,.\\ 
\indent 
It follows that any $\r$-connection $\widetilde{\nabla}$ on $E$, where $\r:E\to TM$ is the projection, corresponds to a $\r$-connection $\nabla$ on $TM$, 
and a section $\g$ of ${\rm Hom}(E,TM)$\,. These are related through the involved section $h$ of ${\rm Hom}(TM,{\rm End}(TM))$\,, 
as follows. 
\begin{equation} \label{e:nabla_tilde} 
\begin{split} 
\widetilde{\nabla}_XY&=\nabla_XY+h(\g(X))Y-<\g(X),Y>\1\;,\\ 
\widetilde{\nabla}_X\1&=\g(X)\;,\\ 
\widetilde{\nabla}_{\1}X&=\nabla_{\1}X+h(\g(\1))X-<\g(\1),X>\1\;,\\ 
\widetilde{\nabla}_{\1}\1&=\g(\1)\;, 
\end{split} 
\end{equation} 
for any (local) vector fields $X$ and $Y$ on $M$, where $<\cdot,\cdot>$ denotes the Riemannian metric on $M$ 
(and where, for simplicity, we did not mention the compatibilities with the structural groups; for example, $h$ is, in fact,  
a $1$-form on $M$ with values in the adjoint bundle of the underlying Riemannian structure on $M$, which is 
equivariant with respect to the actions of the adjoint bundle of the almost $H$-structure on $M$, where $H$ is the simply-connected Lie group 
with Lie algebra $\mathfrak{h}$\,).  

\begin{prop} \label{prop:torsion_tilde} 
The torsion $\widetilde{T}$ of $\widetilde{\nabla}$ is given by 
\begin{equation*} 
\begin{split} 
\widetilde{T}(X,Y)&=T(X,Y)+h(\g(X))Y-h(\g(Y))X+\bigl(u\O(X,Y)-<\g(X),Y>+<\g(Y),X>\bigr)\1\;,\\ 
\widetilde{T}(X,\1)&=\g(X)-\nabla_{\1}X-h(\g(\1))X+\bigl(u^{-1}X(u)+<\g(\1),X>\bigr)\1\;,\\ 
\end{split} 
\end{equation*} 
for any $X,Y\in TM$, where $T$ is the torsion of the connection given by $\nabla$ (and the connection on $L$), 
and $\O$ is the curvature form of the connection on $L$\,. 
\end{prop}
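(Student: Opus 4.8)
The torsion of the $\r$-connection $\widetilde{\nabla}$ on the anchored bundle $(E,\r)$ is
$$\widetilde{T}(s_1,s_2)=\widetilde{\nabla}_{s_1}s_2-\widetilde{\nabla}_{s_2}s_1-[s_1,s_2]\;,$$
where $[\cdot,\cdot]$ is the Lie algebroid bracket carried by $E=T(L\setminus0)/(\C\setminus\{0\})$\,, the Atiyah algebroid of the principal $\C\setminus\{0\}$-bundle $L\setminus0$\,; since $\widetilde{T}$ is tensorial it suffices to evaluate it on the generating sections $X,Y\in TM\subseteq E$ and $\1$\,. The plan is to first make this bracket explicit on those generators, and then to substitute \eqref{e:nabla_tilde}.

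For the bracket, recall that sections of $E$ are the $\C\setminus\{0\}$-invariant vector fields on $L\setminus0$\,, that the summand $TM\subseteq E$ consists of the horizontal lifts determined by the connection on $L$\,, and that $u\1$ is the descent of the fundamental vector field. Two computations are needed. First, for $X,Y\in TM$ the vertical part of the bracket of horizontal lifts is measured by the curvature, giving $[X,Y]=[X,Y]_{TM}-u\,\O(X,Y)\1$ (the factor $u$ entering because the fundamental field is $u\1$\,, not $\1$\,). Second, since the fundamental vector field generates the flow under which invariant fields are preserved, $u\1$ is central, $[u\1,X]=0$\,; applying the Leibniz rule $[fs,s']=f[s,s']-(\r(s')f)s$ to $[u\1,X]$ (with $f=u$\,, $s=\1$\,, $s'=X$\,, so that $\r(X)=X$\,) then yields $[\1,X]=u^{-1}X(u)\1$\,, that is, $[X,\1]=-u^{-1}X(u)\1$\,.

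It remains to substitute. For $\widetilde{T}(X,Y)$ the first two lines of \eqref{e:nabla_tilde} give $\widetilde{\nabla}_XY-\widetilde{\nabla}_YX=(\nabla_XY-\nabla_YX)+h(\g(X))Y-h(\g(Y))X+\bigl(<\g(Y),X>-<\g(X),Y>\bigr)\1$\,; subtracting $[X,Y]=[X,Y]_{TM}-u\,\O(X,Y)\1$ and using $\nabla_XY-\nabla_YX-[X,Y]_{TM}=T(X,Y)$ collects the tangential part into $T(X,Y)$ and produces the term $u\,\O(X,Y)\1$\,. For $\widetilde{T}(X,\1)$ the terms $\g(X)$ and $-\nabla_{\1}X-h(\g(\1))X+<\g(\1),X>\1$ from \eqref{e:nabla_tilde}\,, together with $-[X,\1]=u^{-1}X(u)\1$\,, assemble directly into the stated formula.

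The only genuinely delicate point is the correct identification of the Lie algebroid bracket — in particular the factor $u$ and the signs in both the curvature term and in $[X,\1]$\,, the latter arising from the centrality of $u\1$ rather than of $\1$\,. Everything else is a mechanical, bilinear substitution, and, torsion being tensorial, no passage beyond the generators $X,Y,\1$ is required.
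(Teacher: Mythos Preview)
Your proof is correct and is precisely the computation the paper has in mind: the paper's proof is the single line ``This follows quickly from \eqref{e:nabla_tilde}'', and you have spelled out that computation, correctly identifying the Atiyah algebroid bracket (in particular the curvature term $-u\,\O(X,Y)\1$ and the relation $[X,\1]=-u^{-1}X(u)\1$ coming from the centrality of $u\1$) before substituting. There is no difference in approach, only in the level of detail.
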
 
\begin{proof} 
This follows quickly from \eqref{e:nabla_tilde}\,. 
\end{proof} 

\indent 
Let $G\subseteq O(k)$ be a Lie subgroup and $M$ a Riemannian manifold, $\dim M=k$\,, $k\in\mathbb{N}\setminus\{0\}$\,. 
We say that $M$ is a $G$-manifold if its orthonormal frame bundle admits a reduction $(P,M,G)$ 
such that the holonomy group of the Levi-Civita connection of $M$, with respect to some $u\in P$, is contained by $G$. 

\begin{rem} \label{rem:Killing_inf_auto} 
We consider only nowhere isotropic Killing vector fields $V$ on $G$-manifolds $M$ which are infinitesimal automorphisms of the given $G$-structure; 
that is, their flow preserves the reduction $(P,M,G)$\,. We, further, assume that the frames from $P$ whose first vector is 
given by $V$ (locally) normalized form a principal bundle preserved by the local flow of $V$.  
\end{rem} 

\begin{thm} \label{thm:Killing_on_G-manifolds} 
Let $M$ be a $G$-manifold, with $G$ connected and acting tranzitively on the unit (complex-)sphere. 
Let $H\subseteq G$ be the connected Lie subgroup whose Lie algebra is formed of those matrices of the Lie algebra of $G$ whose 
first rows and columns are zero.\\ 
\indent 
Let $M$ be endowed with a Killing vector field such that $H$ is the structural group of the bundle of frames 
whose first vector is given by $V$ normalized.\\  
\indent 
Then, locally, $M$ is of the form $N\times\C$ with the metric 
$<\cdot,\cdot>+u^2(\dif\!t+A)^2$, where $\bigl(N,<\cdot,\cdot>\bigr)$ is an almost $H$-manifold endowed with a compatible connection $\nabla$, 
a section $\mathcal{B}$ of its adjoint bundle, a nowhere zero function $u$, and a $1$-form $A$ such that, on denoting by $\g$ the $(1,1)$-tensor field 
on $N$ given by $\g(X)=\mathcal{B}X-u^{-1}h(\grad u)X$, for any $X\in TN$, the following relations hold
\begin{equation} \label{e:Killing_on_G-manifolds}
\begin{split} 
T(X,Y)&=-h(\g(X))Y+h(\g(Y))X\;,\\ 
\dif\!A(X,Y)&=2u^{-1}<\g(X),Y>\;,
\end{split} 
\end{equation}
for any $X,Y\in TN$, where $T$ is the torsion of $\nabla$; in particular, $\nabla+h\circ\g$ is the Levi-Civita connection of 
$\bigl(N,<\cdot,\cdot>\bigr)$. 
\end{thm}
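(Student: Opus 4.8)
The plan is to identify the Levi-Civita connection of the given $G$-manifold, restricted to the fields invariant under the Killing flow, with a torsion-free $\rho$-connection on the bundle $E$ of the present section, so that the equations \eqref{e:Killing_on_G-manifolds} become exactly the vanishing of the torsion $\widetilde{T}$ computed in Proposition \ref{prop:torsion_tilde}. First I would fix the local geometry. Let $(P,g)$ be a $G$-manifold with nowhere isotropic Killing field $K$; its flow then generates a locally free action whose local orbit space is a manifold $N$, and $P$ is locally $N\times\C$ with $K=\partial_t$ and $u^2:=g(K,K)$ nowhere zero. Writing the metric dual of $K$ as $u^2(\dif t+A)$, with $A$ the pull-back of a $1$-form on $N$, puts $g$ in the stated form $<\cdot,\cdot>+u^2(\dif t+A)^2$, where $<\cdot,\cdot>$ is the induced metric on $N$ and the horizontal distribution is $K^{\perp}=\ker(\dif t+A)$; the connection $\dif t+A$ on the principal $\C\setminus\{0\}$-bundle $P\to N$ has curvature $\O=\dif\!A$. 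Under the resulting identification $TP\cong\rho^*E$ with $E=TN\oplus\C$, the field $K$ corresponds to $u\1$ (so $\1$ is the unit vertical field), the horizontal lift of $X\in TN$ corresponds to $X\in E$, and the two metrics match; the holonomy reduction of $P$ to $G$ together with the stabiliser of the nondegenerate line $\C K$ provides the almost $H$-structure on $N$, exactly as in the particular case discussed after Remark \ref{rem:starting_rel_6}.

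The key step is that sections of $E$ are precisely the $\C\setminus\{0\}$-invariant vector fields on $P$, with Lie-algebroid bracket equal to their commutator, and that the Levi-Civita connection $\nabla^g$ --- being invariant under the Killing flow, metric, and torsion-free --- restricts to these sections to a $G$-compatible $\rho$-connection $\widetilde{\nabla}$ with torsion $\widetilde{T}(\sigma,\tau)=\nabla^g_\sigma\tau-\nabla^g_\tau\sigma-[\sigma,\tau]=0$. Thus $\widetilde{\nabla}$ has the form \eqref{e:nabla_tilde} for a compatible connection $\nabla$ on $TN$ and a section $\g$ of ${\rm Hom}(E,TN)$, and I would read off the relations from Proposition \ref{prop:torsion_tilde} with $\widetilde{T}=0$: the $TN$-part of $\widetilde{T}(X,Y)=0$ gives $T(X,Y)=-h(\g(X))Y+h(\g(Y))X$; the $\1$-part of $\widetilde{T}(X,\1)=0$ gives $<\g(\1),X>=-u^{-1}X(u)$, that is $\g(\1)=-u^{-1}\grad u$; and the $TN$-part of the same equation gives $\g(X)=\nabla_{\1}X+h(\g(\1))X$.

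It remains to package these into the statement. Since $\rho(\1)=0$, the operator $\mathcal{B}:=\nabla_{\1}$ is an endomorphism of $TN$, and the last relation becomes $\g(X)=\mathcal{B}X-u^{-1}h(\grad u)X$, as required. Expressing $\widetilde{\nabla}_{\1}\in\mathfrak{g}\subseteq\mathfrak{so}(n+1)$ through \eqref{e:nabla_tilde} and comparing with the matrix form of a general element $(A,x)$ of $\mathfrak{g}=\mathfrak{h}\oplus\mathfrak{m}$ recalled above shows that $\mathcal{B}$ is its $\mathfrak{h}$-component, hence a section of the adjoint bundle; the same comparison applied to $\widetilde{\nabla}_X$ shows that $\nabla$ is $\mathfrak{h}$-valued, i.e.\ a compatible connection. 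As $\mathcal{B}$ and $h(\grad u)$ are skew, $\g$ is skew, so $<\g(X),Y>-<\g(Y),X>=2<\g(X),Y>$; combined with the $\1$-part of $\widetilde{T}(X,Y)=0$, namely $u\O=<\g(X),Y>-<\g(Y),X>$, and $\O=\dif\!A$, this yields $\dif\!A(X,Y)=2u^{-1}<\g(X),Y>$. Finally $\nabla+h\circ\g$ has torsion $T(X,Y)+h(\g(X))Y-h(\g(Y))X=0$ and is metric, since $h$ is $\mathfrak{so}(n)$-valued, hence it is the Levi-Civita connection of $\bigl(N,<\cdot,\cdot>\bigr)$. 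I expect the main obstacle to be the key step: setting up the precise dictionary --- the metric matching, the $G$-structure on $E$ versus the holonomy of $P$, and the equalities $\O=\dif\!A$ and $K=u\1$ --- that converts the torsion-freeness of $\nabla^g$ into the purely algebraic identity $\widetilde{T}=0$; once this is in place, everything else is a direct reading of Proposition \ref{prop:torsion_tilde}.
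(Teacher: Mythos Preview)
Your proposal is correct and follows the same approach as the paper: the paper's proof is the one-line remark that the result ``follows quickly from Proposition \ref{prop:torsion_tilde}\,, where the $\rho$-connection \ldots\ is given by the connection $\nabla$ and $\mathcal{B}$'', and you have simply unpacked this by identifying the Levi-Civita connection of the $G$-manifold (restricted to $K$-invariant fields) with a torsion-free $G$-compatible $\rho$-connection on $E$, then reading off \eqref{e:Killing_on_G-manifolds} from $\widetilde{T}=0$ via Proposition \ref{prop:torsion_tilde}. Your extraction of $\mathcal{B}=\nabla_{\1}$, of $\g(\1)=-u^{-1}\grad u$, and the skewness argument giving the factor $2$ in the $\dif\!A$ equation are exactly the details the paper leaves implicit.
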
 
\begin{proof}
This follows quickly from Propositions \ref{prop:torsion_tilde}\,, where the $\r$-connection (which we denote by the same $\nabla$)   
is given by the connection $\nabla$ and $\mathcal{B}$.  
\end{proof}

\section{Basic facts in the geometry in dimension $6$} \label{section:rel_6} 

\indent 
The starting point of this section is the the following, most likely, known fact, where $\mathfrak{g}_2$ is the simple Lie algebra of dimension 14 
(see \cite{Pan-eholon} for an explicit description and, also, the proof of Proposition \ref{prop:starting_rel_6}\,, below). 

\begin{prop} \label{prop:starting_rel_6} 
Let $\mathfrak{g}_2\subseteq\mathfrak{so}(7)$ be given by the fundamental representation of dimension $7$ of $\mathfrak{g}_2$\,. 
Then under the equality of Lie algebras $\mathfrak{sl}(3)=\mathfrak{g}_2\,\cap\,\mathfrak{so}(6)$\,, the (adjoint) re\-pre\-sen\-ta\-tion of 
$\mathfrak{sl}(3)$ on its orthogonal complement in $\mathfrak{g}_2$ can be identified with the representation induced by the canonical representation of 
$\mathfrak{so}(6)$ (and with the direct sum of its $3$-dimensional fundamental representations). 
\end{prop}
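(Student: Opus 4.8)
The plan is to verify that $\mathfrak{g}_2\subseteq\mathfrak{so}(7)$ is an instance of the ``significant particular case'' above, with $n=6$, $\mathfrak{h}=\mathfrak{g}_2\cap\mathfrak{so}(6)$ and $\mathfrak{m}$ the orthogonal complement of $\mathfrak{h}$ in $\mathfrak{g}_2$, and then to read off the module structure. Since $\mathfrak{g}_2$ is simple, the restriction to it of the Killing form of $\mathfrak{so}(7)$ is a nonzero multiple of its own Killing form, hence nondegenerate and $\mathrm{ad}$-invariant; therefore $\mathfrak{g}_2=\mathfrak{h}\oplus\mathfrak{m}$ is a reductive decomposition with $[\mathfrak{h},\mathfrak{m}]\subseteq\mathfrak{m}$. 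First I would identify $\mathfrak{h}$ using the explicit description of \cite{Pan-eholon}: the $G_2$-structure on $\C^{\!7}$ induces on the nondegenerate hyperplane an $\mathrm{SL}(3)$-structure (a complex structure together with a compatible holomorphic volume form), and the stabiliser in $\mathfrak{g}_2$ of the normal direction is exactly the subalgebra preserving it; this gives $\mathfrak{h}=\mathfrak{sl}(3)$, of dimension $8$, so that $\dim\mathfrak{m}=14-8=6=n$, as required.

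Next I would set up the block form. Writing $\C^{\!7}=\C^{\!6}\oplus\C e$, with $\mathfrak{so}(6)$ the stabiliser of $e$, the ``last column'' map $\mathfrak{m}\to\C^{\!6}$, sending an element of $\mathfrak{m}$ to its off-diagonal block $x$, is linear, and its kernel is $\mathfrak{m}\cap\mathfrak{so}(6)\subseteq\mathfrak{g}_2\cap\mathfrak{so}(6)=\mathfrak{h}$, which meets $\mathfrak{m}$ only in $0$. As both spaces have dimension $6$ this map is an isomorphism, so each element of $\mathfrak{m}$ is uniquely of the displayed form with a well-defined linear $h(x)\in\mathfrak{so}(6)$; this is precisely the map $h$ of the general setting.

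The equivariance is then essentially forced. For $A\in\mathfrak{h}\subseteq\mathfrak{so}(6)\subseteq\mathfrak{so}(7)$ and the element $\widetilde{X}\in\mathfrak{m}$ with off-diagonal block $x$, a direct block computation gives
\[
[A,\widetilde{X}]=\begin{pmatrix}[A,h(x)] & Ax\\ -(Ax)^T & 0\end{pmatrix}.
\]
Since $[\mathfrak{h},\mathfrak{m}]\subseteq\mathfrak{m}$ and the block form is unique, the right-hand side again lies in $\mathfrak{m}$; comparing off-diagonal blocks shows that $[A,\widetilde{X}]$ corresponds to $Ax$, while comparing the diagonal blocks forces $[A,h(x)]=h(Ax)$. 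Hence the isomorphism $\mathfrak{m}\to\C^{\!6}$, $\widetilde{X}\mapsto x$, intertwines the adjoint action of $\mathfrak{sl}(3)$ on $\mathfrak{m}$ with the canonical representation of $\mathfrak{so}(6)$ restricted to $\mathfrak{sl}(3)$, which is the first asserted identification.

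To recognise the module abstractly I would pass to the root system of $\mathfrak{g}_2$: the six long roots form the $A_2$-subsystem corresponding to $\mathfrak{sl}(3)$, and the six short roots are the weights of $\mathfrak{m}$; the latter make up a hexagon which splits into the two weight triangles of the standard representation and its dual, so $\mathfrak{m}\cong\C^{\!3}\oplus(\C^{\!3})^*$ as $\mathfrak{sl}(3)$-modules, giving the second identification. The main obstacle is the very first step — establishing $\mathfrak{g}_2\cap\mathfrak{so}(6)=\mathfrak{sl}(3)$ and pinning down the embedding $\mathfrak{sl}(3)\subseteq\mathfrak{so}(6)$ concretely — since this is where the explicit description of $\mathfrak{g}_2$ (equivalently, the invariant complex structure on the hyperplane) is genuinely needed; the remaining steps are formal consequences of reductivity and the uniqueness of the block form.
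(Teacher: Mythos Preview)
Your argument is correct and self-contained: the nondegeneracy of the Killing form on $\mathfrak{g}_2$, the identification of the vector stabiliser with $\mathfrak{sl}(3)$, the injectivity of the ``last column'' map by dimension, the block computation for equivariance, and the long-root/short-root dichotomy in $G_2$ all go through as you describe. The paper, however, takes a deliberately explicit route: it writes down the image of $\mathfrak{sl}(3)$ inside $\mathfrak{so}(6)$ in the block form $\begin{pmatrix}[x]&-y\\y&[x]\end{pmatrix}$ and then exhibits the complementary copy of $\C^{\!6}$ inside $\mathfrak{so}(7)$ as $\{a\}^1+\{b\}^2$ with concrete $7\times7$ matrices, quoting \cite{Pan-eholon} for the fact that their span is $\mathfrak{g}_2$. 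Your approach is cleaner conceptually and avoids any computation beyond the $2\times2$ block bracket, but the paper's explicit description is not gratuitous: the formulae $\{a\}^1$, $\{b\}^2$ are precisely what produce the map $h$ used throughout Section~\ref{section:Killing_on_G2-manifolds} (in particular the expression $h\begin{pmatrix}X_+\\X_-\end{pmatrix}=\tfrac12\begin{pmatrix}[X_-]&[X_+]\\ [X_+]&-[X_-]\end{pmatrix}$), so the computational proof doubles as the derivation of the structure constants needed later. Your proof establishes the proposition but would leave those formulae to be extracted separately.
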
 
\begin{proof} 
We have to show that we are in the setting of Section \ref{section:useful_setting} with $\mathfrak{g}=\mathfrak{g}_2\subseteq\mathfrak{so}(7)$\,, 
and $\mathfrak{sl}(3)=\mathfrak{g}_2\,\cap\,\mathfrak{so}(6)$\,.\\ 
\indent 
We choose to deduce this by giving an explicit description, in matrix form, of the Lie algebras embeddings 
$\mathfrak{sl}(3)\subseteq\mathfrak{g}_2\subseteq\mathfrak{so}(7)$\,. For this, if $x\in\C^{\!3}$, we denote by $[x]$ the element of 
$\mathfrak{so}(3)$ such that $[x]y=x\times y$\,, for any $y\in\C^{\!3}$, where $\times$ denotes the quaternionic cross product 
(the well known explicit description of the isomorphism of Lie algebras between $(\C^{\!3},\times)$ and $\mathfrak{so}(3)$\,). 
Also, we use the characterisation of $\mathfrak{sl}(3)\subseteq\mathfrak{so}(6)$ as formed of all the matrices 
$\begin{pmatrix}
[x] &-y \\ 
y & [x]
\end{pmatrix} 
\;,$  
with $x\in\C^{\!3}$ and $y$ a $3\times3$ trace-free symmetric matrix.\\ 
\indent 
Now, we use the obvious embedding of $\mathfrak{so}(6)$ into $\mathfrak{so}(7)$ whose image has the fourths' rows and columns equal to zero.  
Also, we embed $\C^{\!6}$ into $\mathfrak{so}(7)$ as follows: to any $(a,b)\in\C^{\!6}=\C^{\!3}\oplus\C^{\!3}$ 
we associate $\{a\}^1+\{b\}^2$, where 
$$\{a\}^1= 
\begin{pmatrix}
0 & 2a & [a] \\ 
-2a^T & 0 & 0 \\ 
[a] & 0 & 0 
\end{pmatrix}
\;,\hspace*{5mm} 
\{b\}^2= 
\begin{pmatrix}
[b] & 0 & 0 \\ 
0 & 0 & -2b^T \\ 
0 & 2b & -[b] 
\end{pmatrix}
\;.$$ 
\indent 
From \cite{Pan-eholon} we deduce that $\mathfrak{g}_2$ is, inside $\mathfrak{so}(7)$\,, the direct sum of $\mathfrak{sl}(3)$ and $\C^{\!6}$, 
and the proof follows. 
\end{proof} 

\begin{rem} 
1) There is another way to look at Proposition \ref{prop:starting_rel_6}\,.
We have $\mathfrak{so}(6)+\mathfrak{g}_2=\mathfrak{so}(7)$\,, and $\mathfrak{so}(6)\cap\mathfrak{g}_2=\mathfrak{sl}(3)$\,. 
Consequently, the re\-pre\-sen\-ta\-tion of $\mathfrak{sl}(3)$ on its orthogonal complement in $\mathfrak{g}_2$ is given, by restriction, 
by the represention of $\mathfrak{so}(6)$ on its orthogonal complement in $\mathfrak{so}(7)$\,.\\ 
\indent 
2)  Similarly, let $\mathfrak{so}(7)_0$ and $\mathfrak{so}(7)_1$ 
be the images of the embeddings of $\mathfrak{so}(7)$ into $\mathfrak{so}(8)$ given by the canonical representation and the 
fundamental representation of dimension $8$ of $\mathfrak{so}(7)$\,, respectively.\\ 
\indent 
Then $\mathfrak{so}(7)_0+\mathfrak{so}(7)_1=\mathfrak{so}(8)$\,, and $\mathfrak{so}(7)_0\cap\mathfrak{so}(7)_1=\mathfrak{g}_2$\,. 
Moreover, both embeddings of $\mathfrak{g}_2$ into $\mathfrak{so}(7)_j$\,, $j=0,1$\,, are given by the fundamental representation 
of dimension $7$ of $\mathfrak{g}_2$\,. Consequently, the representation of $\mathfrak{so}(7)_1$ on $\mathfrak{so}(8)$ 
decomposes as the direct sum of its adjoint and canonical representations. Furthermore, it, also, follows that the reductive decompositions  
induced by the embeddings of $\mathfrak{so}(7)_j$ into $\mathfrak{so}(8)$\,, $j=0,1$\,, are the same (symmetric decomposition).\\ 
\indent 
3) Also, the embedding of $\mathfrak{so}(7)$ into $\mathfrak{so}(8)$\,, corresponding to the $8$-dimensional fundamental representation of the 
former, admits an explicit description, similar to the one given, in Proposition \ref{prop:starting_rel_6}\,, for 
$\mathfrak{g}_2\subseteq\mathfrak{so}(7)$\,. This follows, for example, from the latter 
and the formula for $h$ given in Section \ref{section:Killing_on_Spin(7)-manifolds}\,, below.  
\end{rem} 

\indent 
Let $Q^5\subseteq{\rm Gr}_3^0(7)$ be the embedding corresponding to the Lie algebras embedding $\mathfrak{g}_2\subseteq\mathfrak{so}(7)$\,, 
and let $V\subseteq U_{1,0}$ be the (nondegenerate) subspace of dimension $6$ giving the equality 
$\mathfrak{sl}(3)=\mathfrak{g}_2\,\cap\,\mathfrak{so}(6)$\,, where $U_{1,0}$ is the fundamental representaton space of dimension $7$ of $\mathfrak{g}_2$\,.   
Recall that ${\rm Gr}_3^0(V)=PU\sqcup PU^*$, where $U$ is a vector space of dimension $4$ 
such that $V=\Lambda^2U$. Also, note that, the Euclidean structure on $V$ (giving $\mathfrak{so}(6)$\,) 
is induced by an (oriented) Euclidean structure on $U$.  
Although this induces an isomorphism from $U$ onto $U^*$ we will keep the distinction 
between them to emphasize that $PU$ parametrizes `positive' (isotropic) subspaces whilst $PU^*$ `negative' subspaces.  

\begin{prop} \label{prop:rel_6_other_facts}  
{\rm (i)} The subspace of points of $Q^5\bigl(\subseteq{\rm Gr}_3^0(7)\bigr)$ which as $3$-dimensional isotropic spaces are 
contained by $V$ is of the form $PW\sqcup PW^*$, where $W\subseteq U$ has codimension $1$\,.\\ 
\indent 
{\rm (ii)} Let $Q^4={\rm Gr}_1^0(V)$\,. There exists a surjective map from $Q^4\setminus\bigl(PW\sqcup PW^*\bigr)$  
onto $\bigl\{p\cap V\,|\,p\in Q^5,\,\dim(p\cap V)=2\,,\,p\cap V\textrm{anti-self-dual}\bigr\}$ characterised by $\ell\mapsto p\cap V$ 
if and only if there exists a nondegenerate associative space $q\subseteq V\bigl(\subseteq U_{1,0}\bigr)$ such that $\ell=p\cap q$\,.  
\end{prop}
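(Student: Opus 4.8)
The plan is to reduce everything to the explicit model furnished by Proposition~\ref{prop:starting_rel_6}\,. First I would fix $U_{1,0}=\mathbb{C}^7=V\oplus V^{\perp}$ with $\dim V^{\perp}=1$\,, and use the identification $V=\Lambda^2U$\,, $\dim U=4$\,, under which $\mathfrak{so}(6)=\mathfrak{sl}(U)$\,. The decisive structural point is that $\mathfrak{sl}(3)=\mathfrak{g}_2\cap\mathfrak{so}(6)$ acts on $U$ \emph{reducibly}, as $U=W\oplus L$ with $W$ the $3$-dimensional standard representation and $L$ a trivial line; this is exactly the hyperplane $W\subseteq U$ of the statement, forced by the matrix form of $\mathfrak{sl}(3)$ in Proposition~\ref{prop:starting_rel_6} together with $\Lambda^2(W\oplus L)=\Lambda^2W\oplus(W\otimes L)\cong W^*\oplus W$\,. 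Under these identifications ${\rm Gr}_3^0(V)=PU\sqcup PU^*$ splits into the $\alpha$-planes $\alpha_u=u\wedge U$ (for $[u]\in PU$) and the $\beta$-planes $\beta_{U'}=\Lambda^2U'$ (for hyperplanes $U'=\ker\phi$\,, $[\phi]\in PU^*$), while $Q^4={\rm Gr}_1^0(V)={\rm Gr}_2(U)$ is the Klein quadric and the null $2$-planes of $V$ form ${\rm Gr}_2^0(V)=F_{1,3}(U)$\,, the variety of flags $(\mathrm{line}\subseteq\mathrm{hyperplane})$ in $U$\,.

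For (i), I would use that $Q^5$ is $G_2$-invariant and $V$ is $\mathfrak{sl}(3)$-invariant, so $Q^5\cap{\rm Gr}_3^0(V)$ is $SL(3)$-invariant and respects the splitting $PU\sqcup PU^*$\,. The $SL(3)$-orbits on $PU=P(W\oplus L)$ are $PW$\,, the point $[L]$\,, and the open orbit, and symmetrically on $PU^*$; hence $Q^5\cap{\rm Gr}_3^0(V)$ is a union of such orbits. A dimension count ($5+3-6=2$) predicts it is $2$-dimensional, hence built from the closed orbits, and testing one representative per orbit against membership in $Q^5$ — most economically through the $G_2$ cross product, equivalently the explicit matrices — gives $\alpha_u\in Q^5\iff u\in W$ and $\beta_{U'}\in Q^5\iff L\subseteq U'$\,. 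Since $\{U':L\subseteq U'\}\cong P(W^*)$\,, this yields $Q^5\cap{\rm Gr}_3^0(V)=PW\sqcup PW^*$\,.

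For (ii), I would first record that inside $Q^4={\rm Gr}_2(U)$ the loci $PW$ and $PW^*$ are, respectively, the $2$-planes $\Pi$ containing $L$ and the $2$-planes $\Pi$ contained in $W$ — one plane from each ruling of the Klein quadric, determined by $L$ and by $W$ — so that $Q^4\setminus(PW\sqcup PW^*)$ consists precisely of the $\Pi$ transverse to both ($L\not\subseteq\Pi$ and $\Pi\not\subseteq W$). Given such an $\ell$\,, the map is built by choosing an associative $3$-space $q\subseteq V$ through $\ell$; the cross-product structure then singles out a null $3$-plane $p\in Q^5$ with $p\cap q=\ell$ and $\dim(p\cap V)=2$\,, and one outputs $p\cap V$\,, a null $2$-plane of $V$ containing $\ell$\,. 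To finish I would identify the image as the expected $4$-dimensional subvariety of ${\rm Gr}_2^0(V)$ and show that $\ell\mapsto p\cap V$ is bijective, using $SL(3)$-equivariance to reduce injectivity and surjectivity to a single generic computation, both source and target being $4$-dimensional.

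The main obstacle is the well-definedness and bijectivity in (ii). Concretely, the crux is to prove that the output $p\cap V$ depends only on $\ell$ and not on the auxiliary associative $q$ — a uniqueness statement to the effect that the planes $p\in Q^5$ with $\dim(p\cap V)=2$ meeting the chosen $q$ along $\ell$ all share the same $V$-part — and, dually, that over the excluded locus $PW\sqcup PW^*$ this mechanism degenerates (the intersection $p\cap q$ jumps in dimension, or no admissible $q$ through $\ell$ exists), which is precisely why that locus is removed. Once this uniqueness is in hand, the stated `if and only if' characterisation via $q$ is immediate, and the equivariance collapses the remaining verification to one orbit.
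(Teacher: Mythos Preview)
Your plan is sound and its overall architecture matches the paper's. For part (i) you take a genuinely different route: you produce $W$ representation-theoretically, via the splitting $U=W\oplus L$ under $\mathfrak{sl}(3)\subseteq\mathfrak{sl}(U)$ and the identification $\Lambda^2(W\oplus L)\cong W^*\oplus W$, and then argue by $SL(3)$-orbit structure on $PU\sqcup PU^*$ plus a dimension count. The paper instead works on the $V$ side: it uses the orthogonal complex structure $J$ on $V=V_+\oplus V_-$ (built from the octonionic cross product) whose $\pm\mathrm{i}$-eigenspaces $W_{\pm}$ are themselves isotropic $3$-planes lying in $PU$ and $PU^*$, and then defines $W\subseteq U$ as the annihilator of $W_-$. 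Your approach is more systematic and makes the $SL(3)$-equivariance explicit; the paper's is quicker once one has the complex structure in hand. One caution: your dimension count only bounds the expected dimension, so it does not by itself exclude the isolated orbit $[L]$ (or its dual); you still need the representative test there, as you say.

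For part (ii) you correctly isolate the crux --- independence of the output $p\cap V$ from the auxiliary associative $q$ --- but you leave it as ``the main obstacle'' and propose to handle only injectivity and surjectivity by equivariance. The paper does supply a concrete mechanism for the well-definedness step, and it is worth knowing because it is more direct than an orbit reduction. Namely, it first records that a point of $Q^5$ is uniquely determined by an isotropic line together with an associative $3$-space containing it. Then, given two associative $q_1,q_2\subseteq V$ with $q_1\cap q_2=\ell$, one has $\dim(q_1^{\perp}\cap q_2^{\perp})=2$ in $U_{1,0}$ (since $q_1+q_2$ has dimension $5$), and this $2$-plane contains the nondegenerate line $V^{\perp}$ yet is degenerate, hence carries a \emph{unique} isotropic direction. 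That isotropic direction is what pins down $p\cap V$, independently of which $q_j$ was used. This argument sidesteps the need to verify that the relevant triples $(\ell,q_1,q_2)$ lie in a single $SL(3)$-orbit --- a transitivity claim your equivariance strategy would otherwise have to establish --- and it also makes the inverse of the correspondence essentially visible.
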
   
\begin{proof} 
(i) Let $V_{\pm}=\Lambda_{\pm}^2U$ and, note that, $\mathfrak{sl}(3)\subseteq\mathfrak{gl}(3)$\,, 
where $\mathfrak{gl}(3)\subseteq\mathfrak{so}(V)$ is characterised by its com\-pa\-ti\-bi\-li\-ty with the 
orthogonal complex structure mapping $(x_+,x_-)\in V=V_+\oplus V_-$ to $(-x_-,x_+)$\,, 
where we use the orientation preserving isometry between $V_+$ and $V_-$ given by the octonionic cross product on $U_{1,0}$\,. 
Let $W_{\pm}$ be the $(\pm{\rm i})$-eigenspaces 
of this orthogonal complex structure. Then $W_+$ and $W_-$ are points in $PU$ and $PU^*$, respectively, and $W\subseteq U$ is the annihilator 
of $W_-$ as a $1$-dimensional subspace of $U^*$.\\ 
\indent 
(ii) Any point of $Q^5$ is uniquely determined by an isotropic direction in $U_{1,0}$ and a nondegenerate associative space containing it. 
Let $\ell\in Q^4\setminus\bigl(PW\sqcup PW^*\bigr)$\,. We have to show that if $q_1$ and $q_2$ are nondegenerate associative spaces contained by $V$ 
such that $\ell=q_1\cap q_2$ then the points of $Q^5$ determined by $(\ell,q_1)$ and $(\ell,q_2)$ have the same intersection with $V$.  
This follows quickly from the following two facts:\\ 
\indent 
\quad(ii1) $\dim\bigl(q_1^{\perp}\cap q_2^{\perp}\bigr)=2$\,, where $q_j^{\perp}$ is the orthogonal complement of $q_j$ in $U_{1,0}$\,, $j=1,2$\,;\\ 
\indent 
\quad(ii2) $q_1^{\perp}\cap q_2^{\perp}$ is degenerate and contains a unique isotropic direction, as it, also, contains $V^{\perp}$. 
\end{proof}     

\begin{rem} \label{rem:nK_submfds} 
Let $N$ be a Riemannian manifold of dimension $6$ endowed with a reduction $P$ of its orthonormal frame bundle to ${\rm GL}(3)$\,, 
corresponding to an almost Hermitian structure $J$. Similarly to Remark \ref{rem:starting_rel_6}\,, the Levi-Civita connection $\nabla$ of $N$ 
corresponds to a connection on $P$ and a $(1,1)$-tensor field $A$ on $N$.\\ 
\indent 
Then $(N,J)$ is nearly-K\"ahler (that is $(\nabla_XJ)(X)=0$\,, for any $X\in TN$) if and only if, pointwisely, $A$ is in the space 
generated by ${\rm Id}_{TN}$ and $J$. 
\end{rem} 

\indent 
Unless otherwise stated, any nondegenerate hypersurface in an almost $G_2$-manifold will be considered endowed with the almost Hermitian structure 
given by Propositions \ref{prop:starting_rel_6} and \ref{prop:rel_6_other_facts}\,. 

\begin{cor} \label{cor:nK_submfds} 
A nondegenerate hypersurface of a $G_2$-manifold is nearly-K\"ahler/K\"ahler if and only if it is umbilical/geodesic. 
\end{cor}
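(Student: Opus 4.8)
The plan is to read the Levi-Civita connection of the ambient $G_2$-manifold along the hypersurface in the setting of Section~\ref{section:useful_setting}, with $\mathfrak{g}=\mathfrak{g}_2$ and $\mathfrak{h}=\mathfrak{sl}(3)$, and to recognise the $(1,1)$-tensor field of Remark~\ref{rem:nK_submfds} as, up to an invertible factor, the shape operator. Let $M$ be the $G_2$-manifold, $N\subseteq M$ the nondegenerate hypersurface, and work in a $G_2$-adapted orthonormal frame $(e_1,\dots,e_6,e_7)$ with $e_7$ normal to $N$. Since the restricted holonomy of $M$ lies in $G_2$, the $\mathfrak{so}(7)$-valued connection form of $\M$ is $\mathfrak{g}_2$-valued; writing it in the block form of Section~\ref{section:useful_setting} (with $n=6$), its off-diagonal block is, by the Weingarten equation, $\theta=-S$, where $S$ is the shape operator of $N$ (the sign being immaterial below), and the $\mathfrak{g}_2$-condition forces the $\mathfrak{so}(6)$-block to be $A_{\mathfrak{sl}}+h(\theta)$ with $A_{\mathfrak{sl}}$ an $\mathfrak{sl}(3)$-valued $1$-form. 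By the Gauss equation this $\mathfrak{so}(6)$-block is the Levi-Civita connection $\nabla$ of $N$.

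Next I would pin down the tensor $A$ attached to $\nabla$ by Remark~\ref{rem:nK_submfds}. Using the explicit $h$ read off from Proposition~\ref{prop:starting_rel_6} --- namely $h(a,b)=\begin{pmatrix}[b]&[a]\\ [a]&-[b]\end{pmatrix}$ in the splitting $V=V_+\oplus V_-$ --- a direct check gives that $h(a,b)$ anticommutes with $J=\begin{pmatrix}0&-I\\ I&0\end{pmatrix}$, so $h$ maps $\mathfrak{m}$ injectively into the orthogonal complement $\mathfrak{m}'$ of $\mathfrak{gl}(3)$ in $\mathfrak{so}(6)$, hence isomorphically by dimension. As $A_{\mathfrak{sl}}\in\mathfrak{sl}(3)\subseteq\mathfrak{gl}(3)$ while $h(\theta)\in\mathfrak{m}'$, the above splitting of $\nabla$ is exactly its $\mathfrak{gl}(3)\oplus\mathfrak{m}'$ decomposition, so $A$ is the image of $h(\theta)=-h(S)$ under the canonical isomorphism $\Psi'\colon\mathfrak{m}'\to TN$. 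The composite $\Psi'\circ h\colon TN\to TN$ is then an invertible $\mathfrak{sl}(3)$-equivariant endomorphism; since $TN\cong\C^{\!3}\oplus(\C^{\!3})^*$ splits into the two inequivalent $3$-dimensional fundamental representations (Proposition~\ref{prop:starting_rel_6}), Schur's lemma identifies its commutant with ${\rm span}\{{\rm Id},J\}$, whence $\Psi'\circ h=c\,{\rm Id}+dJ$ with $c^2+d^2\neq0$ and $A=-(c\,{\rm Id}+dJ)\circ S$.

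Finally I would combine this with Remark~\ref{rem:nK_submfds}. Because $c^2+d^2\neq0$, the factor $c\,{\rm Id}+dJ$ is invertible with inverse $(c\,{\rm Id}-dJ)/(c^2+d^2)$ again in ${\rm span}\{{\rm Id},J\}$, so composing with it is a linear bijection of ${\rm End}(TN)$ preserving ${\rm span}\{{\rm Id},J\}$; therefore $A\in{\rm span}\{{\rm Id},J\}$ if and only if $S\in{\rm span}\{{\rm Id},J\}$, and $A=0$ if and only if $S=0$. As $S$ is self-adjoint while $J$ is skew-adjoint, $S\in{\rm span}\{{\rm Id},J\}$ forces the $J$-coefficient to vanish, i.e.\ $S=\lambda\,{\rm Id}$. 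Remark~\ref{rem:nK_submfds} then gives at once that $N$ is nearly-K\"ahler if and only if $A\in{\rm span}\{{\rm Id},J\}$, i.e.\ if and only if $N$ is umbilical, and that $N$ is K\"ahler if and only if $A=0$, i.e.\ if and only if $N$ is geodesic. I expect the one genuinely delicate point to be the middle paragraph: verifying that $h$ lands in the $\mathfrak{gl}(3)$-complement $\mathfrak{m}'$ (so that the $\mathfrak{gl}(3)$-part of $\nabla$ contributes nothing to $A$) and that the resulting $\mathfrak{sl}(3)$-equivariant comparison of $A$ with $S$ is an \emph{invertible} element of ${\rm span}\{{\rm Id},J\}$; everything else is formal.
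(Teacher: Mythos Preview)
Your argument is correct and is precisely the unpacking of what the paper means by ``a straightforward consequence of Proposition~\ref{prop:starting_rel_6} and Remark~\ref{rem:nK_submfds}'': you read the $\mathfrak{g}_2$-valued Levi-Civita connection along $N$ in the block form of Section~\ref{section:useful_setting}, identify the off-diagonal block with the shape operator and the $\mathfrak{so}(6)$-block with the intrinsic Levi-Civita, and then compare the $\mathfrak{gl}(3)\oplus\mathfrak{m}'$ decomposition of the latter (Remark~\ref{rem:nK_submfds}) with the $\mathfrak{sl}(3)\oplus h(\mathfrak{m})$ decomposition forced by $\mathfrak{g}_2$. The Schur-lemma step showing that the comparison map lies in ${\rm span}\{{\rm Id},J\}$ and is invertible is a clean way to make the ``straightforward'' explicit; the paper simply omits these details.
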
 
\begin{proof} 
This is a straightforward consequence of Proposition \ref{prop:starting_rel_6} and Remark \ref{rem:nK_submfds}\,. 
\end{proof} 

\begin{cor} \label{cor:hm_G2} 
Let $\phi$ be a \emph{real} harmonic morphism with $1$-dimensional fibres from a $G_2$-manifold to an Einstein manifold.\\ 
\indent 
Then the codomain of $\phi$ is nearly-K\"ahler and its fibres are geodesics orthogonal to an umbilical foliation by hypersurfaces. 
\end{cor}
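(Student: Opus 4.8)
The plan is to combine the general theory of harmonic morphisms with one-dimensional fibres with the $G_2$-specific facts established in this section, the Ricci-flatness of $G_2$-manifolds, and Corollary \ref{cor:nK_submfds}. Fix, locally, a unit vertical vector field $V$ spanning the fibres of $\phi$, write $\H=V^{\perp}$ for the horizontal distribution, and let $\lambda$ be the dilation, so that $\phi|_{\H}$ is conformal with factor $\lambda$. Since $\phi$ is \emph{real}, $V$ is nowhere isotropic, hence $\H$ and the leaves tangent to it are nondegenerate, which is precisely what is needed to apply Corollary \ref{cor:nK_submfds}. The whole statement then reduces to showing that $\H$ is integrable with geodesic fibres and umbilical leaves: granting this, each leaf is a nondegenerate umbilical hypersurface of the $G_2$-manifold, so by Corollary \ref{cor:nK_submfds} it is nearly-K\"ahler for the almost Hermitian structure of Propositions \ref{prop:starting_rel_6} and \ref{prop:rel_6_other_facts}.

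For the geodesic condition I would use the standard tension-field formula for a horizontally conformal submersion: with $\dim M=7$ and $\dim N=6$, harmonicity is equivalent to $H=4\,\grad_{\H}\log\lambda$, where $H$ is the mean curvature of the (one-dimensional) fibres. Thus the fibres are geodesics exactly when $\grad_{\H}\log\lambda=0$, that is, when $\lambda$ is constant along $\H$. This same vanishing makes $\lambda$ constant on each leaf once $\H$ is integrable, so that $\phi$ restricts to each leaf as a homothety; since the nearly-K\"ahler condition is invariant under homotheties, the nearly-K\"ahler structure of a leaf will then transport to the codomain, which is the remaining assertion.

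The heart of the proof is therefore to produce, from the curvature hypotheses, the vanishing $\grad_{\H}\log\lambda=0$, the vanishing of the integrability tensor of $\H$, and the purely-trace form of the second fundamental form of its leaves. Here I would feed $\Ric=0$ for $M$ (a $G_2$-manifold is Ricci-flat) and the Einstein condition on $N$ into the fundamental O'Neill-type curvature equations for a horizontally conformal submersion with one-dimensional fibres, reading off the vertical--vertical, mixed, and horizontal--horizontal components. I expect this to be the main obstacle: one must show that the single scalar Einstein constraint on the codomain, played against the vanishing Ricci tensor of the domain, simultaneously forces the dilation to be horizontally constant (geodesic fibres), the horizontal distribution to be integrable (a foliation by hypersurfaces orthogonal to the fibres), and this foliation to be umbilical --- essentially the structure theory of harmonic morphisms with one-dimensional fibres on Einstein manifolds specialised to the present codimension.

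Once integrability, geodesic fibres and umbilicity are in place, the conclusion is immediate: Corollary \ref{cor:nK_submfds} identifies the umbilical leaves as nearly-K\"ahler hypersurfaces, and the homothety $\phi|_{\text{leaf}}$ carries this structure, together with the almost Hermitian structure of Propositions \ref{prop:starting_rel_6} and \ref{prop:rel_6_other_facts}, onto the codomain; hence the codomain is nearly-K\"ahler with the fibres geodesic and orthogonal to the umbilical foliation, as claimed.
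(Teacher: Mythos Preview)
Your proposal is correct and follows essentially the same route as the paper: the paper's proof is the one-line ``This follows from \cite{PanWoo-d} and Corollary \ref{cor:nK_submfds}'', and what you have written is precisely an unpacking of that citation --- the structure theory of harmonic morphisms with one-dimensional fibres between Einstein manifolds (from \cite{PanWoo-d}) supplies the geodesic fibres and the integrable umbilical horizontal foliation, after which Corollary \ref{cor:nK_submfds} converts umbilicity into nearly-K\"ahler. Your identification of the ``main obstacle'' as exactly the content of \cite{PanWoo-d} is on target, and the transport of the nearly-K\"ahler structure to the codomain via the leafwise homothety is the right finishing step.
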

\begin{proof} 
This follows from \cite{PanWoo-d} and Corollary \ref{cor:nK_submfds}\,. 
\end{proof}

\section{Killing vector fields on $G_2$-manifolds} \label{section:Killing_on_G2-manifolds} 

\indent 
Approaches different to the one of this section can be found in \cite{ApoSal} and \cite{Fow}\,.\\  
\indent 
To give an almost ${\rm SL}(3)$-structure, through the direct sum of its fundamental representations, on a six-dimensional Riemannian manifold $N$
is the same as to specify the bundle of nondegenerate `associative' spaces contained by $TN$. 
Here, associativity is induced by the octonionic cross product on the bundle $E=TN\oplus\C$, 
obtained through the setting of Section \ref{section:useful_setting} (and with $M$ replaced by $N$), 
by taking $\mathfrak{g}=\mathfrak{g}_2$ and $\mathfrak{h}=\mathfrak{sl}(3)$ (see, also, Proposition \ref{prop:starting_rel_6} 
whose explicit formulae for the embedding $\mathfrak{sl}(3)\subseteq\mathfrak{g}_2$ require a choice of a nondegenerate associative space). 

\begin{cor} \label{cor:Killing_on_G2-manifolds} 
Any $G_2$-manifold endowed with a Killing vector field is locally of the form $N\times\C$ with the metric 
$<\cdot,\cdot>+u^2(\dif\!t+A)^2$, where $\bigl(N,<\cdot,\cdot>\bigr)$ is an almost ${\rm SL}(3)$-manifold endowed with a compatible connection $\nabla$, 
a section $\mathcal{B}$ of its adjoint bundle, a nowhere zero function $u$, and a $1$-form $A$ such that, on denoting by $\g$ the $(1,1)$-tensor field 
on $N$ given by $\g(X)=\mathcal{B}X-u^{-1}h(\grad u)X$, for any $X\in TN$, the following conditions hold\\ 
\indent 
\quad{\rm (i)} $\nabla+h\circ\g$ is the Levi-Civita connection of $\bigl(N,<\cdot,\cdot>\bigr)$\,,\\ 
\indent 
\quad{\rm (ii)} $\dif\!A(X,Y)=2u^{-1}<\g(X),Y>$\,, for any $X,Y\in TN$. 
\end{cor}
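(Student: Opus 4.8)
The plan is to specialize the general Theorem \ref{thm:Killing_on_G-manifolds} to the case $\mathfrak{g}=\mathfrak{g}_2$ and $\mathfrak{h}=\mathfrak{sl}(3)$, using the identifications established in Section \ref{section:rel_6}. First I would invoke Proposition \ref{prop:starting_rel_6}, which shows precisely that the pair $\bigl(\mathfrak{g}_2\subseteq\mathfrak{so}(7),\,\mathfrak{sl}(3)=\mathfrak{g}_2\cap\mathfrak{so}(6)\bigr)$ fits the significant particular case of the setting of Section \ref{section:useful_setting}: here $n=6$, $\dim\mathfrak{m}=6$, the representation of $\mathfrak{h}=\mathfrak{sl}(3)$ on $\mathfrak{m}$ is (the direct sum of) its fundamental representations, and $U=\C^{\!7}$ decomposes orthogonally as $\mathfrak{m}\oplus\mathfrak{m}^{\perp}$ with $\mathfrak{m}^{\perp}=\C$. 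In particular the linear map $h:\mathfrak{m}\to\mathfrak{so}(6)$ and the embedding $\widetilde{A}=(A,x)\mapsto\bigl(\begin{smallmatrix}A+h(x)&x\\-x^T&0\end{smallmatrix}\bigr)$ are exactly the ones read off from the matrices $\{a\}^1,\{b\}^2$ in the proof of Proposition \ref{prop:starting_rel_6}, and the almost $H$-structure becomes an almost ${\rm SL}(3)$-structure through the direct sum of fundamental representations, i.e.\ the bundle of associative spaces described at the start of this section.

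Next I would simply transcribe Theorem \ref{thm:Killing_on_G-manifolds} under these identifications. Since $G$ and $H$ there are the simply-connected groups with Lie algebras $\mathfrak{g}_2$ and $\mathfrak{sl}(3)$, a $G$-manifold is a $G_2$-manifold and an almost $H$-manifold is an almost ${\rm SL}(3)$-manifold. The theorem already produces the local splitting $N\times\C$ with metric $<\cdot,\cdot>+u^2(\dif\!t+A)^2$, the compatible connection $\nabla$, the section $\mathcal{B}$ of the adjoint bundle, the function $u$, the one-form $A$, and the $(1,1)$-tensor $\g(X)=\mathcal{B}X-u^{-1}h(\grad u)X$. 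Condition (ii) of the corollary is literally the second equation of \eqref{e:Killing_on_G-manifolds}, and the assertion that $\nabla+h\circ\g$ is the Levi-Civita connection is the final clause of the theorem, so (i) is immediate as well.

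The only genuine point requiring attention is the first equation of \eqref{e:Killing_on_G-manifolds}, namely the torsion constraint $T(X,Y)=-h(\g(X))Y+h(\g(Y))X$, which does not appear explicitly among conditions (i) and (ii). Here I would use the specific structure of the dimension-$6$ situation: because $N$ carries an almost ${\rm SL}(3)$-structure and $\nabla$ is a compatible connection, the torsion constraint is automatically encoded once one knows that $\nabla+h\circ\g$ is the Levi-Civita (hence torsion-free) connection. Indeed, the torsion of $\nabla+h\circ\g$ is $T(X,Y)+h(\g(X))Y-h(\g(Y))X$, and requiring this to vanish is exactly statement (i) together with the first relation of \eqref{e:Killing_on_G-manifolds}; thus the torsion equation is subsumed by (i) rather than dropped. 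So the corollary faithfully repackages the two relations of \eqref{e:Killing_on_G-manifolds} as its (i) and (ii).

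The main obstacle, such as it is, will be bookkeeping rather than mathematics: I must verify that the hypotheses of the particular case of Section \ref{section:useful_setting} (nondegeneracy of $\mathfrak{g}_2$ in the Killing form of $\mathfrak{so}(7)$, the equality $\dim\mathfrak{m}=n=6$, and the correct normalization of the $\mathfrak{h}$-representation on $\mathfrak{m}$) all hold for $\mathfrak{g}_2$, each of which is furnished by Proposition \ref{prop:starting_rel_6}. Granting that, the corollary follows immediately from Theorem \ref{thm:Killing_on_G-manifolds}, and the proof reduces to the single sentence that it is the specialization of that theorem to $\mathfrak{g}=\mathfrak{g}_2$, $\mathfrak{h}=\mathfrak{sl}(3)$, via Proposition \ref{prop:starting_rel_6}.
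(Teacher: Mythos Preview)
Your proposal is correct and matches the paper's own proof, which consists of the single sentence ``This follows quickly from Theorem \ref{thm:Killing_on_G-manifolds} and Proposition \ref{prop:starting_rel_6}.'' Your additional observation that condition (i) actually absorbs the first relation of \eqref{e:Killing_on_G-manifolds} (because $\nabla$ and $h\circ\g$ are both metric-compatible, so requiring $\nabla+h\circ\g$ to be Levi-Civita is exactly the torsion equation) is a helpful clarification that the paper leaves implicit.
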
 
\begin{proof}
This follows quickly from Theorem \ref{thm:Killing_on_G-manifolds} and Proposition \ref{prop:starting_rel_6}\,. 
\end{proof} 

\begin{rem} \label{rem:Killing_on_G2-manifolds} 
In Corollary \ref{cor:Killing_on_G2-manifolds}\,, $\nabla$ is the Levi-Civita connection of $N$ if and only if 
the Killing vector field corresponds, locally, to isometries between $M$ and the Riemannian product of special K\"ahler manifolds 
and open subsets of the real line. Moreover, this is equivalent to \emph{(a)} the integrability of the orthogonal complement distribution 
to the Killing vector field, and, in the real setting (similarly to Corollary \ref{cor:hm_G2}\,), to \emph{(b)} the orbits of the Killing vector field be geodesics.\\ 
\indent 
A similar statement holds in the setting of Theorem \ref{thm:Killing_on_G-manifolds} if we assume $h:\mathfrak{m}\to\mathfrak{so}(n)$ injective. 
For example, by taking (1) $G={\rm Spin}(7)$, with its fundamental representation of dimension $8$\,, and $H=G_2$\,, 
(2) $G={\rm SL}(2)$ and $H$ trivial, with $G$ endowed with the representation on $\mathfrak{gl}(2)$\,, given by left multiplication. 
In the latter case, with the same notations as in Theorem \ref{thm:Killing_on_G-manifolds}\,, we have $h(X)Y=X\times Y$, for any $X,Y\in TN$, 
and $\g(X)=-u^{-1}h(\grad u)X$\,, for any $X\in TN$\,; consequently, the first relation of \eqref{e:Killing_on_G-manifolds} 
is equivalent to the fact that $u^2\!<\cdot,\cdot>$ is flat. Thus, in this case, Theorem \ref{thm:Killing_on_G-manifolds} 
reduces to the classical Gibbons-Hawking construction (see \cite{Pan-hmhKPW}\,, and the references therein).\\ 
\indent 
See Section \ref{section:Killing_on_Spin(7)-manifolds}\,, below, for the relevant $h$ in case (1)\,.   
\end{rem} 

\indent 
To make the conditions of Corollary \ref{cor:Killing_on_G2-manifolds} more explicit, we endow the tangent bundle of $N$ 
with an `associative (orthogonal) decomposition' $TN=T^+N\oplus T^-N$.\\ 
\indent 
Then we can write 
$\mathcal{B}=\begin{pmatrix}
[b] & -B \\ 
B & [b]  
\end{pmatrix}$\,, where $b$ is a section of $T^+N$ and $B$ is a trace-free self-adjoint section of ${\rm Hom}(T^+N,T^-N)$\,, 
where we use the orientation preserving isometry between $T^+N$ and $T^-N$ given by the octonionic cross product on $TN\oplus\C$.\\ 
\indent 
Also, we have  $h\begin{pmatrix}
X_+\\ 
X_-
\end{pmatrix}
=\frac12\begin{pmatrix}
[X_-] & [X_+] \\ 
[X_+] & -[X_-] 
\end{pmatrix}$\,, for any $X_{\pm}\in T^{\pm}N$.\\ 
\indent 
Consequently, 
\begin{equation} \label{e:gamma} 
\g\begin{pmatrix}
X_+ \\ 
X_-
\end{pmatrix}
=\begin{pmatrix}
b\times X_+ -BX_- -\frac12\,u^{-1}(\grad u)_-\times X_+ -\frac12\,u^{-1}(\grad u)_+\times X_-\\ 
BX_+ +b\times X_- -\frac12\,u^{-1}(\grad u)_+\times X_+ +\frac12\,u^{-1}(\grad u)_-\times X_-
\end{pmatrix}\;,
\end{equation} 
for any $X_{\pm}\in T^{\pm}N$. 

\indent 
Hence, condition (ii) of Corollary \ref{cor:Killing_on_G2-manifolds} becomes  
\begin{equation} \label{e:ii_G2}
\begin{split} 
\dif\!A(X_+,Y_+)&=2u^{-1}\det\bigl(b-\tfrac12\,u^{-1}(\grad u)_-,X_+,Y_+\bigr)\;,\\  
\dif\!A(X_-,Y_-)&=2u^{-1}\det\bigl(b+\tfrac12\,u^{-1}(\grad u)_-,X_-,Y_-\bigr)\;,\\ 
\dif\!A(X_+,Y_-)&=2u^{-1}\bigl(<BX_+,Y_->-\tfrac12\,u^{-1}\det((\grad u)_+,X_+,Y_-)\bigr)\;,\\ 
\dif\!A(X_-,Y_+)&=2u^{-1}\bigl(-<BX_-,Y_+>-\tfrac12\,u^{-1}\det((\grad u)_+,X_-,Y_+)\bigr)\;, 
\end{split} 
\end{equation}
for any $X_{\pm},Y_{\pm}\in T^{\pm}N$, where, obviously, the last two equations give the same condition. 

\begin{rem} \label{rem:useful_particular_case} 
1) If $b=\tfrac12\,u^{-1}(\grad u)_-$\,, $B=0$\,, and $(\grad u)_+=0$ then \eqref{e:gamma} becomes  
$$\g\begin{pmatrix}
X_+ \\ 
X_-
\end{pmatrix}
=\begin{pmatrix}
0\\ 
u^{-1}(\grad u)_-\times X_-
\end{pmatrix}\;,$$
for any $X_{\pm}\in T^{\pm}N$. Therefore, in this case, condition (ii) of Corollary \ref{cor:Killing_on_G2-manifolds} becomes 
\begin{equation} \label{e:useful_particular_case}
\begin{split} 
\dif\!A(X_-,Y_-)&=2u^{-2}\det((\grad u)_-,X_-,Y_-)\;,\\ 
\dif\!A(X_+,Y_{\pm})&=0\;,
\end{split} 
\end{equation}
for any $X_{\pm},Y_{\pm}\in T^{\pm}N$\,; equivalently, $\dif\!A=2u^{-2}*_-\dif\!u$\,, where 
$*_-$ is the Hodge \mbox{$*$-operator} on $T^-N$.\\ 
\indent 
Now, we change $<\cdot,\cdot>$ by the conformal factor $u^2$ along $T^-N$ and by keeping it unchanged elsewhere; denote by $<\cdot,\cdot>_1$ 
this new metric on $N$. Then \eqref{e:useful_particular_case} becomes 
\begin{equation} \label{e:monopole_for_G2}
\dif\!A=-*^1_-\dif(u^{-2})\;,
\end{equation}
where $*^1_-$ is the Hodge \mbox{$*$-operator} on $T^-N$, with respect to $<\cdot,\cdot>_1$\,.\\ 
\indent 
If $T^{\pm}N$ are integrable and $\nabla$ is compatible with the decomposition $TN=T^+N\oplus T^-N$ then 
(i) of Corollary \ref{cor:Killing_on_G2-manifolds} holds if and only if the Levi-Civita connection of $<\cdot,\cdot>_1$ 
is given by $$X\mapsto\nabla_X+u^{-1}X(u){\rm Id}_{TN}+\tfrac12\begin{pmatrix}
u^{-1}[(\grad u)_-\times X_-] & 0 \\ 
0 &  u^{-1}[(\grad u)_-\times X_-] 
\end{pmatrix}\;,$$ 
for any $X=X_++X_-\in TN=T^+N\oplus T^-N$. Consequently, $(N,<\cdot,\cdot>_1)$ is a ${\rm SL}(3)$-manifold.\\ 
\indent 
Assume, further, that, up to a suitable (local) frame on 
$T^+N\,(=T^-N)$ the local connection form of $\nabla$ is given by the $\mathfrak{so}(3)$-valued $1$-form $-\tfrac12\,u^{-1}[(\grad u)_-]$ 
(under the obvious diagonal embedding $\mathfrak{so}(3)\subseteq\mathfrak{sl}(3)$\,). 
Then $T^{\pm}N$ are (integrable and) totally geodesic with respect to $<\cdot,\cdot>$\,; consequently, 
the same holds for $<\cdot,\cdot>_1$\,. Thus, locally, $(N,<\cdot,\cdot>_1)$ is a product $N^+\times N^-$, 
where $N^{\pm}$ are leaves of $T^{\pm}N$. Then (i) of Corollary \ref{cor:Killing_on_G2-manifolds} holds if and only if $N^{\pm}$ are flat.   
Consequently, by applying Corollary \ref{cor:Killing_on_G2-manifolds} we obtain a $G_2$-manifold which, locally, is a product 
of a flat Euclidean space of dimension $3$\,, and a Ricci-flat anti-self-dual manifold obtained by applying the Gibbons-Hawking construction.\\ 
\indent 
2) If $b=0$\,, $B=0$\,, and $(\grad u)_+=0$ (or $(\grad u)_-=0$) then (i) of Corollary \ref{cor:Killing_on_G2-manifolds} is equivalent to 
$(N,u\!<\cdot,\cdot>)$ is K\"ahler.\\ 
\indent 
3) If $(\grad u)_+=0$ (or $(\grad u)_-=0$) then (i) of Corollary \ref{cor:Killing_on_G2-manifolds} is equivalent to the fact that the Levi-Civita connection of 
$(N,u\!<\cdot,\cdot>)$ is $\nabla^1+h\circ\mathcal{B}$, where $\nabla^1$ is a connection on $N$ compatible with the 
almost Hermitian structure of $(N,u\!<\cdot,\cdot>)$\,. 
\end{rem}

\begin{cor} \label{cor:for_G2} 
With the same hypotheses and notations as in Corollary \ref{cor:Killing_on_G2-manifolds}\,, 
suppose that the set where $\grad u$ is an eigenvector of the underlying almost complex structure has empty interior.\\ 
\indent 
Then the Levi-Civita connection of $(N,u\!<\cdot,\cdot>)$ is equal to $\nabla^1+h\circ\mathcal{B}$, 
where $\nabla^1$ is a connection on $(N,u\!<\cdot,\cdot>)$ compatibe with its almost Hermitian structure; 
in particular, if $\mathcal{B}=0$ then $(N,u\!<\cdot,\cdot>)$ is K\"ahler; moreover, in the latter case, 
$u^{1/2}\o$ and $u^{1/2}\widetilde{\o}$ are closed, for any forms $\o$ and $\widetilde{\o}$\,, 
of types $(3,0)$ and $(0,3)$\,, respectively, such that $\nabla\o=0=\nabla\widetilde{\o}$\,. 
\end{cor} 
\begin{proof} 
It is sufficient to prove that $(N,u\!<\cdot,\cdot>)$ is K\"ahler, at least, outside a closed set with empty interior.\\ 
\indent  
If $u$ is constant then the proof follows from Corollary \ref{cor:Killing_on_G2-manifolds}\,. 
Consequently, we may assume that $u$ has no critical points. Then, at least outside the set $S$ where $\grad u$ is isotropic, locally, 
we may endow $N$ with an associative decomposition $TN=T^+N\oplus T^-N$, 
such that $(\dif\!u)|_{T^-N}=0$\,. Furthermore, the same holds on the interior of $S$, and the proof follows from (2) and (3) of 
Remark \ref{rem:useful_particular_case}\,. 
\end{proof}

\begin{rem} \label{rem:G_2_with_B=0}  
Let $(N,k)$ be a K\"ahler manifold, $\dim N=6$\,, endowed with a nowhere isotropic form $\a$ of type $(3,0)\oplus(0,3)$\,. 
Suppose that $v^{-1/3}\a$ and $\iota_{v^{-1}\!\grad v}\a$ are closed, where $v^2=k(\a,\a)$\,.\\ 
\indent 
Then, the locally defined, $\bigl(\C\!\times N,v^{2/3}k+v^{-4/3}(\dif\!t+A)^2\bigr)$ is a $G_2$-manifold, where $A$ is a $1$-form on $N$ such that 
$\dif\!A=-\tfrac43\,\iota_{v^{-1}\!\grad v}\a$\,. Moreover, any $G_2$-manifold endowed with a Killing vector field 
is, locally, obtained this way if, with the same notations 
as in Corrolary \ref{cor:Killing_on_G2-manifolds}\,, $\mathcal{B}=0$ and 
the set where $\grad u$ is an eigenvector of the underlying complex structure has empty interior.\\ 
\indent 
This follows from Corollaries \ref{cor:Killing_on_G2-manifolds} and \ref{cor:for_G2}\,.\\ 
\indent 
Also, we have ${\rm div}(\grad v)+v^{-1}k(\dif\!v,\dif\!v)=0$\,; equivalently, the locally defined, $\log v$ is harmonic.\\  
\indent 
If $(N,k)$ is flat special K\"ahler then such $\a$ can be found, but only covariantly constants in the real setting.  
\end{rem}

\section{Killing vector fields on ${\rm Spin}(7)$-manifolds} \label{section:Killing_on_Spin(7)-manifolds} 

\indent 
With the same notations as in Section \ref{section:useful_setting}\,, here we consider the case $G={\rm Spin}(7)$ and $H=G_2$ (compare \cite{Fow-quotient_Spin(7)}\,). 
Then a result similar Corollary \ref{cor:Killing_on_G2-manifolds} follows from Theorem \ref{thm:Killing_on_G-manifolds}\,. 
Further, with $U$ the space of imaginary octonions, $h:U\to\mathfrak{so}(U)$ is given by $h(X)=-\tfrac13X\times(\cdot)$\,, for any $X\in U$, 
where $\times$ denotes the octonionic cross product.\\ 
\indent  
Similarly to Section \ref{section:Killing_on_G2-manifolds}\,, an orthogonal decomposition $U=U_+\oplus U_-$ with $U_+$ (nondegenerate and) associative, 
will be called \emph{associative}.   

\begin{lem} \label{lem:double_octon_cross_prod}
Let $U=U_+\oplus U_-$ be an associative decomposition, and let $u\in U_+$ and $X\in U$. 
Then $(u\times X)\times(\cdot)$ is given by $-3(u_aX_b-u_bX_a)_{a,b}$\,, 
up to an element of $\mathfrak{g}_2(\subseteq\mathfrak{so}(U))$\,, where $u_a$ and $X_a$ are the components 
of $u$ and $X$, respectively, with respect to an octonionic basis of $U$, adapted to the given associative decomposition. 
\end{lem} 
\begin{proof} 
This follows from a straightforward computation. 
\end{proof}  
 
\begin{cor} \label{cor:for_Spin(7)} 
Under the hypotheses and notations of Theorem \ref{thm:Killing_on_G-manifolds}\,, with $G={\rm Spin}(7)$ and $H=G_2$\,, 
assume $\mathcal{B}=0$\,.\\ 
\indent 
Then (locally) $(N,u^{2/3}\!<\cdot,\cdot>)$ is a $G_2$-manifold which, if $u$ is nonconstant, is the product of a special K\"ahler manifold, of dimension $6$\,, 
and an open subset of $\C$. 
\end{cor} 
\begin{proof} 
It is sufficient to prove that $(N,u^{2/3}\!<\cdot,\cdot>)$ is a $G_2$-manifold, at least, outside a closed set with empty interior.\\ 
\indent  
If $u$ is constant then the proof follows from Theorem \ref{thm:Killing_on_G-manifolds}\,. 
Consequently, we may assume that $u$ has no critical points. Then, at least outside the set $S$ where $\grad u$ is isotropic, locally, 
we may endow $N$ with an associative decomposition $TN=T^+N\oplus T^-N$, 
such that $(\dif\!u)|_{T^-N}=0$\,. Furthermore, the same holds on the interior of $S$, and from Theorem \ref{thm:Killing_on_G-manifolds} 
and Lemma \ref{lem:double_octon_cross_prod} it follows that $(N,u^{2/3}\!<\cdot,\cdot>)$ is a $G_2$-manifold, at least, outside the frontier of $S$.\\ 
\indent 
If $u$ is nonconstant then, as $\mathcal{B}=0$\,, we have on $(N,u^{2/3}\!<\cdot,\cdot>)$ a nontrivial covariantly constant vector field, thus, 
completing the proof.  
\end{proof}

\begin{rem} \label{rem:for_Spin(7)} 
1) In Corrollary \ref{cor:for_Spin(7)}\,, if $(N,u^{2/3}\!<\cdot,\cdot>)$ is given by a product of a hyper-K\"ahler manifold, of dimension $4$\,, 
and a domain of an Euclidean space, of dimension $3$\,, with $u$ constant along the submanifolds given by the former, then,  
up to a constant, $u^{-5/3}$ must be linear.\\ 
\indent  
Conversely, by starting with a linear function defined on a suitable domain of the space of imaginary quaternions then, by using  
Theorem \ref{e:Killing_on_G-manifolds} and Corollary \ref{cor:for_Spin(7)}\,, we can build a ${\rm Spin}(7)$-manifold.\\ 
\indent 
More generally, we can start from a product as in Corollary \ref{cor:for_Spin(7)}\,.\\   
\indent 
2) Without the assumption $\mathcal{B}=0$\,, the conclusion of Corollary \ref{cor:for_Spin(7)} is that the Levi-Civita connection of 
$(N,u^{2/3}\!<\cdot,\cdot>)$ is $\nabla^1+h\circ\mathcal{B}$\,, where $\nabla^1$ is a connection on $N$ compatible with the 
almost $G_2$-structure of $(N,u^{2/3}\!<\cdot,\cdot>)$\,. 
\end{rem}

\section{Killing vector fields on special K\"ahler manifolds} \label{section:sK} 

\indent 
With the same notations as in Section \ref{section:useful_setting}\,, here we consider the case $G={\rm SL}(n)$ 
and $H={\rm SL}(n-1)$\,, where the former is considered with the direct sum of its fundamental representations of dimensions $n\geq2$\,. 
Similarly to the previous sections, these are the eigenspaces of an orthogonal complex structure $J$ on an Euclidean space $\widetilde{U}$, 
$\dim\widetilde{U}=2n$\,. Furthermore, the induced representation space of ${\rm SL}(n-1)$ is a codimension $1$ subspace $U\subseteq\widetilde{U}$  
admitting an orthogonal decomposition $U=\C\!u_0\oplus U_+\oplus U_-$\,, where $u_0\in U$ is such that $<u_0,u_0>=1$\,, $Ju_0\in U^{\perp}$, and 
$J|_{U_{\pm}}:U_{\pm}\to U_{\mp}$ are isometries; in particular, $\dim U_{\pm}=n-1$\,. Note that, ${\rm SL}(n-1)$ acts trivially on $u_0$\,, 
and `canonically' on $U_+\oplus U_-$ (the latter also holds for ${\rm SL}(n)$\,, with respect to $\widetilde{U}_+=U^{\perp}\oplus U_+$ 
and $\widetilde{U}_-=\C\!u_0\oplus U_-$\,).\\ 
\indent  
Consequently, the resulting $h:U\to\mathfrak{so}(U)$ is characterised by the following:\\ 
\indent 
\quad(i) $h(u_0)|_{U_+\oplus U_-}=-\tfrac{1}{n-1}J|_{U_+\oplus U_-}$\,, $h(u_0)u_0=0$\,.\\ 
\indent 
\quad(ii) $h(u_+)u_0=Ju_+$\,, $h(u_+)v_+=0$\,, $h(u_+)u_-=-<Ju_+,u_->u_0$\,, for any $u_+,v_+\in U_+$ and $u_-\in U_-$\,.\\ 
\indent 
\quad(iii) $h(u_-)u_0=Ju_-$\,, $h(u_-)v_-=0$\,, $h(u_-)u_+=-<Ju_-,u_+>u_0$\,, for any $u_-,v_-\in U_-$ and $u_+\in U_+$\,. 

\begin{rem} 
1) The corresponding map $h:U\times U\to U$ is skew-symmetric if and only if $n=2$ is which case, as mentioned in 
Remark \ref{rem:Killing_on_G2-manifolds}\,, it is the quaternionic cross product.\\ 
\indent 
2) Conditions (ii) and (iii) can be written $h(u)u_0=Ju$ and $h(u)v=-<Ju,v>u_0$\,, for any $u,v\in u_0^{\perp}$. 
Furthermore, as $h(u)u_0\in u_0^{\perp}$, for any $u\in U$, we have that (iii) implies (ii)\,. 
\end{rem} 

\begin{cor} \label{cor:sK} 
If $n\geq3$ and $\mathcal{B}=0$\,, the distribution $u_0^{\perp}$ on $(N,<\cdot,\cdot>)$ is integrable, umbilical and its mean curvature is equal 
to $\tfrac{1}{n-1}\,u_0(u)u_0$\,. Consequently, locally, $u_0^{\perp}$ is geodesic with respect to $u^{2/n-1}\!<\cdot,\cdot>$.  
\end{cor} 
\begin{proof}
This follows from conditions (i)\,, (ii)\,, (iii)\,, above, and Theorem \ref{thm:Killing_on_G-manifolds}\,. 
\end{proof}

\begin{cor} \label{cor:sK_conf} 
Let $M$, $\dim M\geq4$\,, be a special K\"ahler manifold endowed with a Killing vector field $V$. 
Let $\V$ be the foliation on $M$ given by $V$ and $JV$.\\ 
\indent  
If $\mathcal{B}=0$ then $\V$ is a conformal foliation. 
\end{cor} 
\begin{proof} 
If $\dim M=4$ this is well known (and fairly obvious; see \cite{BaiWoo2}\,).\\ 
\indent 
If $\dim M\geq6$\,, by Corollary \ref{cor:sK}\,, locally, $\V$ is given by the composition of a Riemannian submersion folowed by a 
horizontally conformal submersion, with nondegenerate fibres. 
\end{proof}

\appendix

\section{Where do the octonions come from?} \label{section:appendix} 

\indent 
It is known (see \cite{Pan-qPt} and the references therein) that the quaternions arise from the Riemann sphere $Y$, endowed with the antipodal map. 
To show this, note that, the antipodal map lifts to $TY$ and therefore to $(TY\setminus0)/(\C\!\setminus\{0\})$\,, aswell. 
Then the fixed point set of the antipodal map on $H^0\bigl((TY\setminus0)/(\C\!\setminus\{0\})\bigr)$ 
is just the algebra of quaternions.\\ 
\indent 
For the octonions, we start from $Q^5$ and observe that the space $Q'$ of projective planes contained by $Q^5$ 
can be identified with $Q^6$ (see \cite{Pan-eholon}\,). Hence, $Q^5$ may be embedded into $Q'$, thus, providing, up to a conjugation,  
the necessary and sufficient ingredient to build the octonions. 
Indeed, we may define $\mathfrak{g}_2$ as the Lie subalgebra of $\mathfrak{so}(7)$ (represented as a Lie algebra of vector fields on $Q'$) 
that preserves $Q^5$. Then the (complex) octonionic cross product is, for example, the torsion of the 
reductive decomposition $\mathfrak{so}(7)=\mathfrak{g}_2\oplus(\mathfrak{g}_2)^{\perp}$.

\end{document}